\DeclareFontFamily{OT1}{pzc}{}
\DeclareFontShape{OT1}{pzc}{m}{it}{<-> s * [1.10] pzcmi7t}{}
\DeclareMathAlphabet{\mathpzc}{OT1}{pzc}{m}{it}
\DeclareMathOperator{\Hom}{Hom}
\DeclareMathOperator{\Ext}{Ext}
\DeclareMathOperator{\Tor}{Tor}
\DeclareMathOperator{\gr}{gr}
\DeclareMathOperator{\img}{im}
\DeclareMathOperator{\spn}{span}
\DeclareMathOperator{\E}{E}
\newcommand{\ktwo}{\mathcal{K}_2}
\newcommand{\T}{\mathbb{T}}
\newcommand{\bbk}{\Bbbk}
 \newcommand{\la}{\langle}
\newcommand{\ra}{\rangle}
\newtheorem{theorem}[subsection]{Theorem}
\newtheorem{lemma}[subsection]{Lemma}
\newtheorem{proposition}[subsection]{Proposition}
\theoremstyle{definition}
\newtheorem{definition}[subsection]{Definition}
\newtheorem{example}[subsection]{Example}
\begin{document}

\title
{Quotients of Koszul algebras and 2-d-determined algebras}

\author[Cassidy]{Thomas Cassidy}
\address{Mathematics Department, Bucknell University, Lewisburg,
PA} \email{tcasssidy@bucknell.edu}
\thanks{This work was partially supported by a grant from the Simons Foundation (Grant Number 210199 to Thomas Cassidy).
The authors would like to thank Andrew Conner for his helpful suggestions.}

\author[Phan]{Christopher Phan}
\address{Department of Mathematics and Statistics,
Winona State University, Winona, MN} \email{CPhan@winona.edu}

\setcounter{page}{1}

\begin{abstract}
Vatne \cite{Vatne} and Green \& Marcos \cite{greenmarcos} have
independently studied the Koszul-like homological properties of
graded algebras that have defining relations in degree 2 and
exactly one other degree. We contrast these two approaches, answer
two questions posed by Green \& Marcos, and find  conditions that
imply the corresponding Yoneda algebras are generated in the
lowest possible degrees.
\end{abstract}

\vskip-1in \maketitle

 \vspace{0.2in}

\section{Introduction}

The Koszul property for graded associative algebras \cite{priddy}
has been generalized in several directions,  including algebras
with defining relations in just one degree ($d$-Koszul algebras
\cite{berger}) and algebras with defining relations in multiple
degrees ($\ktwo$ algebras \cite{csk2alg}).  $d$-Koszul algebras
share many of the nice homological properties of Koszul algebras,
but are not closed under several standard operations. The family
of $\ktwo$ algebras, which includes the $d$-Koszul algebras, has
the advantage of being closed under graded Ore extensions, regular
normal extensions, and tensor products.  Interpolating between
$d$-Koszul and $\ktwo$ algebras leads one to look for Koszul-like
homological properties in algebras with defining relations in just
two degrees.  This idea was considered independently by   Vatne
\cite{Vatne} and Green \& Marcos \cite{greenmarcos}, who each
investigate graded algebras with defining relations in degree 2
and exactly one other degree.  We compare these two approaches to
find sufficient conditions for such an algebra to be $\ktwo$, and
answer two questions posed by Green \& Marcos.

Let $\kk$ be a field and $d$ an integer greater than 2. We
consider graded $\kk$-algebras of the form $R=\T(V)/I$ where $V$
is a finite dimensional vector space, $\T(V)$ is the free algebra
generated by $V$ and  $I$ is a homogenous ideal which can be
generated  by elements in $\T(V)_2$ and $\T(V)_d$. Properties of
$I$ determine the structure
  the bi-graded Yoneda algebra   $E(R):=
\oplus_{i,j} \Ext_R^{i,j}(\kk,\kk)$, where $i$ refers to the cohomological
degree and $j$ is the internal degree that $E(R)$ inherits from the grading on $R$.
 We let $E^i(R)$ denote $\oplus_j E^{i,j}(R)$.
\begin{definition}\label{d:koszul}  $R$ is Koszul if  $E(R)$ is generated as a $\kk$-algebra by $E^{1}(R)$.
  $R$ is $\ktwo$ if  $E(R)$ is generated  as a $\kk$-algebra by $E^1(R)$ and $E^2(R)$. $R$ is $d$-Koszul if
  $E(R)$ is generated  as a $\kk$-algebra by $E^1(R)$ and $E^2(R)$, and also  $I$ is generated in  degree $d$.
\end{definition}

Each of these definitions requires $E(R)$ to be generated in the
lowest possible degrees.  One  can determine whether or not an
algebra is Koszul or $d$-Koszul just by knowing the bi-degrees in
the corresponding Yoneda algebra.
 (Specifically, for $A$ to be $d$-Koszul, we need $E^{i,j}(A) = 0$ unless $j = \delta(i)$, where $\delta(2m) = dm$ and
$\delta(2m +1) = dm+1$. Note that Koszul and 2-Koszul are
synonomous.) In contrast, $E(R)$ can have the same bi-degrees as
an algebra generated in degrees 1 and 2  even when $R$ is not
actually $\ktwo$.

To explore the connections between these definitions, it is
helpful to consider the quadratic generators of $I$ separately
from the degree $d$ generators. Let $I_2$ denote a linearly
independent set of quadratic relations, and  $J$ a set of degree
$d$ relations, so that $I$ is the ideal generated by $I_2$ and
$J$.
 Note that different choices for $J$ can produce the same algebra $R$.
Let $A$ be the algebra $\T(V)/\la I_2\ra$ and let $B = \T(V)/\la
J\ra$, so that the algebra $R$ can be realized as either   $A/\la
J\ra$ or $B/\la I_2\ra$.

 \section{Almost linear resolutions}

 It would be nice if the Koszul property of $A$ and the $d$-Koszul property of $B$ would
 combine to imply that $R$ is $\ktwo$, but this is not necessarily the case,
as is shown in example \ref{counter} below. Indeed, any two of the
algebras $A$, $B$ and $R$ can have good homological behavior while
the third is recalcitrant.
 Remark 7.5 in \cite{ConShel} illustrates the case where $R$ is $\ktwo$, $B$ is $3$-Koszul, and yet $A$ is not Koszul.  In the following example, $R$ is $\ktwo$, $A$ is Koszul, but the Yoneda algebra of $B$ fails to be generated in low degrees.
 \begin{example}
 Let $V=\{x,y\},$ $I_2=\{xy-yx\}$ and $J=\{xyx\}$.  Then $A$ is commutative and $xyx$ is a regular element in $A$, hence by \cite{csk2alg}*{Corollary 9.2}, $R$ is $\ktwo$. But $B$, as a monomial algebra,  fails to be  $3$-Koszul by Proposition 3.8 in \cite{berger}.
 \end{example}

Clearly different hypotheses are required to get good behavior from $R$.
In \cite{Vatne}, Vatne considers the case were $A$ is Koszul, and $R$ has an {\it almost linear} resolution as an $A$-module.
\begin{definition}  $R$ has an almost linear resolution as an $A$-module if $\Ext_A^{i}(R,\kk)=\Ext_A^{i,d-1+i}(R,\kk)$ for all $i>0$.
\end{definition}

Vatne shows that if $A$ is Koszul, $d>3$ and $R$ has an almost
linear resolution as an $A$-module, then $E(R)$ has the correct
bi-degrees for $R$ to be a $\ktwo$ algebra. In fact $R$ is $\ktwo$
in this case, as the following  direct corollary of Theorem 5.15
in \cite{ConShel} shows.
\begin{proposition}\label{vatne-answer}
If $A$ is Koszul, $d \geq 3$, and $R$ has an almost linear resolution as an $A$-module, then $R$ is $\ktwo$.
\end{proposition}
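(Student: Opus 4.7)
The plan is to invoke Theorem~5.15 of \cite{ConShel}, reducing the proof to verifying its hypotheses in the present setting. First, since $A$ is Koszul, $E(A)$ is generated in degree $1$, and hence \emph{a fortiori} in degrees $1$ and $2$, so $A$ is itself $\ktwo$. This supplies the base-algebra hypothesis needed by the Cassidy--Shelton theorem.

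Second, I would translate the almost linear resolution condition into the bi-degree data required by \cite[Theorem~5.15]{ConShel}. The identity $\Ext_A^{i}(R,\kk) = \Ext_A^{i,\, d-1+i}(R,\kk)$ asserts that a minimal $A$-free resolution of $R$ has its $i$-th syzygies generated in internal degree $d-1+i$, so that the bi-degrees of $\Tor_i^A(R,\kk)$ lie along a single line of slope one through $(1,d)$. Under the hypothesis $d \geq 3$, these bi-degrees fall exactly on the shifted-linear locus allowed by Cassidy--Shelton's criterion, which is what their theorem uses to control the bi-degrees of $E(R)$ via the standard change-of-rings machinery relating resolutions over $A$ and over $R$.

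With both hypotheses in hand, \cite[Theorem~5.15]{ConShel} yields directly that $R$ is $\ktwo$. The main obstacle, were one to prove the result from scratch, would be to analyze the change-of-rings spectral sequence
\[
E_2^{p,q} = \Ext_R^{p}\bigl(\kk, \Ext_A^{q}(R,\kk)\bigr) \Longrightarrow \Ext_A^{p+q}(\kk,\kk),
\]
establish strong enough degeneration at $E_2$, and then show that every class in $E(R)$ factors as a product of classes in $E^{1}(R)$ and $E^{2}(R)$. That bookkeeping---including the subtlety that $d \geq 3$ already suffices here, in contrast with Vatne's $d > 3$ needed for the weaker bi-degree statement---is internal to the proof of \cite[Theorem~5.15]{ConShel}, so the present proof consists essentially of the two translations above.
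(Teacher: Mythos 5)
Your proposal takes the same route as the paper: the paper offers no proof beyond observing that the statement is a direct corollary of Theorem~5.15 in Conner--Shelton, which is exactly the reduction you carry out (the additional spectral-sequence commentary is internal to that cited theorem, as you note). Your translation of the almost linear resolution condition into the bi-degree hypotheses of that theorem is the only substantive verification needed, and it is correct.
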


In this case of monomial algebras, the almost linear condition is
relatively easy to check. This fact motivates our concluding
theorem.

\begin{proposition}\label{monom}  Suppose that $I_2$ and $J$ consist of monomials  and that no element of $J$ contains any element
 of $I_2$ as a connected  subword. Then $_AR$ has an almost linear resolution if and only if $\Ext_A^1(R,\kk)=\Ext^{1,d}_A(R,\kk)$.  In this case, $R$ is $\ktwo$.
\end{proposition}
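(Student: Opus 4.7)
The forward direction is immediate by taking $i=1$ in the definition of almost linear resolution. For the converse, I will construct the minimal graded free resolution $P_\bullet \to R$ of $R$ as a left $A$-module explicitly, proving by induction on $n\geq 1$ that $P_n$ is generated in internal degree $d-1+n$.

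Let $K=\ker(A\to R)$, the two-sided ideal of $A$ generated by the images of $J$. Because $K$ has no nonzero elements below degree $d$, the hypothesis $\Ext_A^1(R,\kk) = \Ext_A^{1,d}(R,\kk)$ is equivalent to $K$ being generated as a \emph{left} $A$-module by $J$: that is, $K = \sum_\alpha A j_\alpha$. Setting $P_0 = A$ and $P_1 = \bigoplus_\alpha A e_\alpha$ with $\deg e_\alpha = d$ and $e_\alpha \mapsto j_\alpha$ therefore gives a minimal resolution through level $1$.

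For the inductive step I rely on the following monomial cancellation argument. Since $I_2$ is monomial and no $j_\alpha$ contains any element of $I_2$ as a connected subword, each $j_\alpha$ is a normal word in $A$ with respect to the standard monomial basis. Hence $\sum_\alpha a_\alpha j_\alpha = 0$ in $A$ iff each $a_\alpha j_\alpha = 0$ individually, iff the normal expansion of $a_\alpha$ uses only monomials ending in a letter $x_s$ for which $x_s v_{\alpha,1} \in I_2$ (writing $v_{\alpha,1}$ for the first letter of $j_\alpha$). The minimal generators of $\ker(P_1\to P_0)$ are thus the syzygies $x_s e_\alpha$ of degree $d+1$, indexed by pairs $(\rho_s, j_\alpha)$ with $\rho_s = x_s v_{\alpha,1} \in I_2$. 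Repeating this argument at each subsequent level---where the previous differential is again multiplication by a single letter---shows inductively that $P_n$ has free basis indexed by ``chains'' $(\rho_{s_n}, \rho_{s_{n-1}}, \dots, \rho_{s_2}, j_\alpha)$ consisting of $n-1$ quadratic relations overlapping consecutively in a single letter and terminating against the first letter of $j_\alpha$. Each such basis element has degree $d-1+n$, and the differential multiplies by one letter of $A$, so the resolution is minimal.

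Hence $_AR$ has an almost linear resolution. Since $A$ is a quadratic monomial algebra it is Koszul (a classical theorem of Fr\"oberg), and $d\geq 3$, so Proposition~\ref{vatne-answer} yields that $R$ is $\ktwo$. The main technical obstacle is the syzygy bookkeeping at each level: one must verify that $\ker(P_{n-1}\to P_{n-2})$ splits correctly---first by basis element of $P_{n-2}$ (since each basis element of $P_{n-1}$ maps into exactly one slot), and then by the distinguishing leading letter $x_{s_{n-1}}$ of each $P_{n-1}$-basis element---both of which follow from the unique representation of elements of $A$ in the normal monomial basis.
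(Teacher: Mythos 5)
Your proof is correct and follows essentially the same route as the paper: both arguments note that the hypothesis on $\Ext^1$ makes the two-sided ideal generated by $J$ into a left ideal generated by $J$, then build the minimal projective resolution of $_AR$ by iterated minimal left-annihilators, which are linear (single letters overlapping the first letter of the previous generator) because $A$ is a quadratic monomial algebra and no element of $J$ contains an $I_2$-subword; this forces $\Ext_A^i(R,\kk)$ into internal degree $d-1+i$, and Proposition~\ref{vatne-answer} finishes. The only difference is presentational: you carry out the syzygy bookkeeping explicitly on free modules, whereas the paper realizes the same data as a subcomplex $A\otimes Q_\bullet$ of the bar resolution via the left-annihilator algorithm of \cite{csk2alg}.
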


\begin{proof}
If $_AR$ has an almost linear resolution, then  by definition\\
$\Ext^1_A(R, \bbk) = \Ext^{1,d}_A(R, \bbk)$.

Now, suppose $\Ext^1_A(R, \bbk) = \Ext^{1,d}_A(R, \bbk)$. In the
remainder of this proof, let $\pi_A : \T(V) \rightarrow A$ be
defined by $\pi_A(s) := s + \la I_2 \ra$. Let $\mathcal{M}$ be the
set of monomials $u \in \T(V)$ with $\pi_A(u) \not = 0$.

Note that $A \otimes A_+^{\otimes \bullet}$ is a projective
resolution of $_AR$. We will construct a subresolution $A \otimes
Q_\bullet$ of $A \otimes A_+^{\bullet}$, which is a minimal
projective resolution. We construct $Q_\bullet$ by choosing a
monomial basis $\mathcal{B}_i$ for each vector space $Q_i$.
Because $\Ext^1_A(R, \bbk) = \Ext^{1,d}_A(R, \bbk)$, the
left-ideal in $A$ generated by   $\pi_A(J)$ is equal to the
two-sided ideal generated by the same elements. Thus we may begin
by setting $\mathcal{B}_1 = \{\pi_A(s): s \in J\}$.

Now, suppose that $\mathcal{B}_i$ consists of elements of the form
\[\pi_A(u_i) \otimes \pi_A(u_{i-1}) \otimes \cdots \otimes \pi_A(u_1)\]
where each $u_t \in \mathcal{M}$. Then, we   set
\[\begin{split}\mathcal{B}_{i+1} := \{ &\pi_A(u_{i+1}) \otimes \pi_A(u_i) \otimes \pi_A(u_{i-1}) \otimes \cdots \otimes \pi_A(u_1) \\
&: u_{t} \in \mathcal{M}, \pi_A(u_i) \otimes \pi_A(u_{i-1}) \otimes \cdots \otimes \pi_A(u_1) \in \mathcal{B}_i,\\
 &\text{ and } \pi_A(u_{i+1}) \text{ is a minimal left-annihilator of $\pi_A(u_i)$.}\}\end{split}\]
Therefore, every $\mathcal{B}_i$ will consist of pure tensors of
monomials. Furthermore, since $A$ is a quadratic monomial algebra,
each minimal left-annihilator is linear, and so
\[\mathcal{B}_i \subset A_1^{\otimes i -1} \otimes \spn_\bbk J.\]
Therefore, $A \otimes Q_\bullet$ is an almost linear resolution of
$_AR$. Since $A$ is monomial, it is also Koszul, and thus $R$ is
$\ktwo$ by \ref{vatne-answer}.
\end{proof}

\section{2-$d$-determined algebras}

In \cite{greenmarcos}, Green and Marcos study the case where the bi-degrees of $E(R)$ are no greater than
the bi-degrees of a $\ktwo$ algebra with defining relations in degree 2 and $d$.  They call such an algebra 2-$d$-determined.
 Like Vatne,  Green and Marcos
 assume that $R$ is a quotient of a Koszul algebra $A$, but they do so via Gr\"obner bases.
They assume that $I$ has a reduced Gr\"obner basis $\mathpzc{g}=\mathpzc{g}_2\cup \mathpzc{g}_d,$ so that
 $A=\T(V)/\la \mathpzc{g}_2\ra$ and $B=\T(V)/\la \mathpzc{g}_d\ra$.

At the end of \cite{greenmarcos}, Green and Marcos ask three questions.  In Theorems \ref{answer1} and \ref{answer2}
we provide negative answers to the first two questions (which we have rephrased slightly)\footnote{In their formulation,
Green and Marcos consider quotients of graph algebras. We only consider connected-graded algebras, which suffice to answer
the questions in the negative.}:
\begin{enumerate}
\item If $C$ is an $2$-$d$-determined, then is the $\Ext$-algebra $\E(C)$ finitely generated? \label{q:fingen}
\item If $C$ is a $2$-$d$-determined algebra and the $\Ext$-algebra $\E(C)$ is finitely generated, is $C$ a $\ktwo$
algebra (assuming that the global dimension of $C$ is infinite)?\label{q:k2}
\end{enumerate}

We use the following construction (see \cite{ppquadalg}*{\S III.1}):
\begin{definition}
Let $A$ and $B$ be graded algebras. The \textbf{free product} of $A$ and $B$ is the algebra
\[A \sqcup B := \bigoplus_{\substack{i \geq 0\\ \epsilon_1, \epsilon_2 \in \{0, 1\}}} A_+^{\otimes \epsilon_1} \otimes (B_+ \otimes A_+)^{\otimes i} \otimes B_+^{\otimes \epsilon_2}.\]
\end{definition}
This related result will be of importance.
\begin{proposition}[c.f. {\cite{ppquadalg}*{Proposition III.1.1}}]\label{p:freeprodext}
For graded algebras $A$ and $B$,
\[\E(A\sqcup B) \simeq \E(A) \sqcap \E(B).\]
\end{proposition}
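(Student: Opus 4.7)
The plan is to produce a canonical graded algebra map $\Phi \colon \E(R) \to \E(A) \sqcap \E(B)$ for $R := A \sqcup B$, and to verify that it is an isomorphism via a direct computation of $\Tor^R(\kk, \kk)$.

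The structural inclusions $\iota_A \colon A \hookrightarrow R$ and $\iota_B \colon B \hookrightarrow R$ coming from the coproduct structure on $R$ induce restriction maps $\iota_A^\ast \colon \E(R) \to \E(A)$ and $\iota_B^\ast \colon \E(R) \to \E(B)$ on Ext algebras. These assemble, via the universal property of the fiber product in the category of augmented connected graded $\kk$-algebras, into a unique graded algebra homomorphism
\[
\Phi \colon \E(R) \longrightarrow \E(A) \sqcap \E(B),
\]
where $\E(A) \sqcap \E(B) = \kk \oplus \E^{\geq 1}(A) \oplus \E^{\geq 1}(B)$ denotes the direct product (that is, the fiber product over $\kk$) of augmented graded algebras, in which products of positive-degree elements drawn from distinct factors vanish.

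The key computation is the decomposition of left $R$-modules
\[
R_+ \;\simeq\; R \otimes_A A_+ \,\oplus\, R \otimes_B B_+.
\]
This follows from the alternating-tensor structure of the free product: every element of $R_+$ is a sum of alternating words that end in either $A_+$ or $B_+$, and $R$ is free as a right $A$-module (with basis the alternating words not ending in $A_+$) and similarly as a right $B$-module. Since $R$ is flat --- indeed free --- over both $A$ and $B$, applying $R \otimes_A -$ and $R \otimes_B -$ to minimal free $A$- and $B$-resolutions of $\kk$ (shifted to cover $A_+$ and $B_+$) produces minimal resolutions of the two summands of $R_+$. Splicing their direct sum with $0 \to R_+ \to R \to \kk \to 0$ yields a minimal free $R$-resolution of $\kk$ whose degree-$n$ term for $n \geq 1$ is $R \otimes (E_n^A \oplus E_n^B)$, where $E_n^A := \Tor_n^A(\kk,\kk)$ and $E_n^B := \Tor_n^B(\kk,\kk)$.

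Dualizing gives $\E^n(R) \simeq \E^n(A) \oplus \E^n(B)$ for $n \geq 1$, and tracing through the chain map lifting $\iota_A$ and $\iota_B$ shows that $\iota_A^\ast$ and $\iota_B^\ast$ become the projections onto the two summands. Hence $\Phi$ is a graded algebra map whose graded pieces are bijections, so $\Phi$ is an isomorphism of graded algebras (and in particular the Yoneda product on $\E(R)$ is forced to satisfy the vanishing-cross-product relations defining $\sqcap$, since $\Phi$ is injective). The main obstacle is establishing the left-$R$-module decomposition of $R_+$ and confirming the minimality of the spliced resolution --- both reduce to careful bookkeeping with the alternating-tensor structure of $A \sqcup B$.
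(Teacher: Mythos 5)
The paper does not actually prove this proposition --- it is quoted from Polishchuk--Positselski (Proposition III.1.1 of \cite{ppquadalg}) with a ``c.f.'' citation and no argument --- so there is no in-paper proof to compare against. Your proposal supplies a correct, essentially standard proof, and it is in the spirit of the source being cited. The two load-bearing points are exactly the ones you identify: first, the left $R$-module decomposition $R_+ \simeq (R \otimes_A A_+) \oplus (R \otimes_B B_+)$, which holds because the alternating words ending in $A_+$ form a left $R$-submodule free over the words not ending in $A_+$ (note $A_+ \cdot A_+ \subseteq A_+$, so left multiplication cannot move a word out of its summand); and second, that $R$ is free as a right $A$- and $B$-module, so $R \otimes_A -$ carries the truncated minimal $A$-resolution of $\kk$ to a resolution of $R \otimes_A A_+$ whose differentials still have entries in $A_+ \subseteq R_+$, giving minimality of the spliced complex and hence $\Tor_n^R(\kk,\kk) \simeq \Tor_n^A(\kk,\kk) \oplus \Tor_n^B(\kk,\kk)$ for $n \geq 1$. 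Your handling of the multiplicative structure is also the efficient route: since $\sqcap$ is the fiber product over $\kk$ (the categorical product of augmented algebras), the restriction maps $\iota_A^*, \iota_B^*$ assemble into an algebra map $\Phi$ for free, and once the chain-level identification shows these restrictions are the two projections, $\Phi$ is degreewise bijective and the vanishing of cross-products in $\E(A\sqcup B)$ follows rather than needing a separate Yoneda-product computation. The only step deserving one more sentence in a fully written version is the verification that the inclusion $P_\bullet \hookrightarrow R \otimes_A P_\bullet \subseteq \tilde P_\bullet$ of the minimal $A$-resolution into the spliced $R$-resolution is a chain map over $\mathrm{id}_\kk$, which is what identifies $\iota_A^*$ with the projection; this is routine bookkeeping, as you say.
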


%\section{Filtrations by monomials}

Suppose  $V$ has an ordered basis $x_1 < x_2 < \cdots < x_n$. Then we can order the monomials of $\T(V)$ by
degree-lexicographical order. This induces a filtration $F$ on $A$, $\Tor_A(\bbk, \bbk)$ and $\E(A)$.
(See, for example, \cite{ppquadalg}*{Chapter IV} and \cite{phan}.)

With this filtration $F$, we now have several versions of the
$\Ext$-functor. First, there is the \emph{ungraded} $\Ext$-functor
(over $A$-modules and $\gr^F A$-modules), which is the derived
functor of the ungraded $\Hom$-functor. Next, since $A$ and its
associated graded algebra $\gr^F A$ are both graded with respect
to an internal degree, we have the $\mathbb{N}$-\emph{graded}
$\Ext$-functor (over $A$-modules and $\gr^F A$-modules), the
derived functor of the $\mathbb{N}$-graded $\Hom$ functor. Finally
$\gr^F A$ is graded by the monoid of monomials in $\T(V)$, and
hence we have a monomial-graded $\Ext$-functor (over $\gr^F
A$-modules).

We will use the following result:
\begin{lemma}[{\cite{phan}*{Theorem 1.2}}]\label{l:bigradedmono}
Let $\gr^F \E(A)$ be the associated graded algebra of \emph{ungraded} algebra $\E(A)$ under the filtration $F$, and
$\E(\gr^F A)$ be the \emph{graded} $\Ext$-algebra with respect to the monomial grading. Then there is a bigraded
 algebra monomorphism
\[\Lambda: \gr^F \E(A) \hookrightarrow \E(\gr^F A).\]
\end{lemma}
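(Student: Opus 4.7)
The natural tool is the reduced bar/cobar construction. Let $\mathrm{Cb}^\bullet(A) := \Hom_\bbk(A_+^{\otimes \bullet}, \bbk)$ denote the reduced cobar complex, whose cohomology computes $\E(A)$. The multiplicative filtration $F$ on $A$ restricts to $A_+$ and, via convolution of filtrations, induces a filtration on each $A_+^{\otimes n}$ and hence on $\mathrm{Cb}^\bullet(A)$. Because the formation of associated graded commutes with tensor products of filtered vector spaces, one has a canonical identification $\gr^F \mathrm{Cb}^\bullet(A) \cong \Hom_\bbk((\gr^F A_+)^{\otimes \bullet}, \bbk)$, which is precisely the reduced cobar complex of $\gr^F A$. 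In particular its cohomology, with its monomial grading, is $\E(\gr^F A)$.

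The map $\Lambda$ is then the edge homomorphism of the spectral sequence of the filtered complex $\mathrm{Cb}^\bullet(A)$. Concretely, given $[\alpha] \in F^p \E(A)$, choose a cocycle representative $\tilde\alpha \in F^p \mathrm{Cb}^\bullet(A)$; its image $\overline{\tilde\alpha}$ in $\gr^F_p \mathrm{Cb}^\bullet(A)$ is a cocycle, and $\Lambda([\alpha])$ is declared to be its cohomology class in $\E(\gr^F A)$. The usual filtered-complex argument yields well-definedness on $\gr^F \E(A)$ and injectivity simultaneously: if $\overline{\tilde\alpha} = \partial \overline{\tilde\beta}$ in $\gr^F_p$, then $\tilde\alpha - \partial \tilde\beta$ lies in $F^{p-1} \mathrm{Cb}^\bullet(A)$ and still represents $[\alpha]$, so $[\alpha] \in F^{p-1}\E(A)$ and vanishes in the $p$-th layer of $\gr^F \E(A)$.

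For the algebra structure, the standard cup product on the reduced cobar complex (which induces the Yoneda product on $\E(A)$) is given by tensor concatenation of multilinear forms and therefore strictly respects the convolution filtration $F$. Consequently, passing to $\gr^F$ yields the analogous cup product on the reduced cobar complex of $\gr^F A$, so $\Lambda$ is multiplicative. Bigradedness is automatic: cohomological degree is preserved tautologically; internal degree is preserved because the degree-lexicographic filtration refines the internal grading on $A$ (each layer of $F$ is concentrated in a single internal degree), so passing through the cobar complex retains this refinement and $\Lambda$ respects the induced monomial grading on each side.

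The principal technical obstacle is strong convergence of the filtered-complex spectral sequence, which is what actually guarantees that the edge map is injective rather than merely defined. This reduces to verifying that in each fixed bidegree $(i,j)$, the filtration induced on $\mathrm{Cb}^{i,j}(A)$ is finite. Finiteness is immediate from the finite-dimensionality of $A_j$: only finitely many monomials of $\T(V)$ map into internal degree $j$ in any fixed tensor power, so the monomial-indexed filtration stabilizes in that bidegree, hence on cohomology. A secondary care point, already implicit above, is to choose a model for the Yoneda product whose compatibility with $F$ is visible at the cochain level (the reduced cobar cup product works), so that the induced product on $\gr^F$ is literally that of $\E(\gr^F A)$ rather than merely isomorphic to it after further identification.
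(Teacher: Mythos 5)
First, a point of reference: the paper never proves this lemma --- it is imported verbatim as Theorem~1.2 of \cite{phan} --- so there is no internal argument to compare against, and your proposal has to stand on its own.

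Your framework is the standard and correct one: the reduced cobar complex, the convolution filtration on $A_+^{\otimes\bullet}$, the identification $\gr^F \mathrm{Cb}^\bullet(A)\cong \mathrm{Cb}^\bullet(\gr^F A)$, finiteness of the filtration in each bidegree, and compatibility of the cup product with the filtration are all fine. The gap is at the single most important step: well-definedness of $\Lambda$. For a general filtered complex, $E_\infty^p=\gr^p H$ is only a \emph{subquotient} of $E_1^p=H(\gr^p)$, not a subobject, and the recipe ``choose a cocycle representative $\tilde\alpha$ in filtration level $p$ and take the class of its image in $\gr^p$'' is not independent of the choice. Two such representatives differ by $\partial\gamma$ where $\gamma$ need not lie in level $p$ even though $\partial\gamma$ does, and the image of such a $\partial\gamma$ in $H(\gr^p\mathrm{Cb}^\bullet(A))$ is exactly the image of the higher differentials $d_r$ ($r\geq 1$) of the spectral sequence; it need not vanish. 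A two-term complex $\bbk\, b\xrightarrow{\ \sim\ }\bbk\, y$ with $b$ in level $0$ and $y$ in level $1$ already exhibits the failure: the zero class in $\gr^1 H^1$ has both $0$ and $y$ as level-$1$ cocycle representatives, which map to different classes in $H^1(\gr^1)$. Your ``usual filtered-complex argument'' only proves injectivity \emph{assuming} well-definedness: there you take $\tilde\beta$ already in level $p$, which is legitimate for showing the kernel is trivial but silently dodges the actual issue. Relatedly, you misidentify the ``principal technical obstacle'': convergence is indeed immediate from the finiteness of the filtration in each bidegree, but what actually makes the edge map exist is the strictness-type statement $\partial(\mathrm{Cb}^\bullet(A))\cap F^p\subseteq F^{p+1}+\partial(F^p)$ (equivalently, the vanishing of all incoming higher differentials), and that statement is essentially the entire content of the lemma. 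It must be extracted from the specific combinatorics of the degree-lexicographic monomial filtration --- this is precisely what \cite{phan} supplies --- whereas your proposal assumes it.
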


In the case where each $\E^{i,j}(A)$ is finite-dimensional, the graded and ungraded versions coincide
(see \cite{phan}*{Lemma 1.4}). A slight modification of the proof of \cite{phan}*{Lemma 2.11} yields:
\begin{lemma}\label{l:gradedtrans}
If $\E^{i,j}(A)$ is finite-dimensional for every $i,j$, then
\[\dim \E^{i,j}(A) = \dim \bigoplus_{|\alpha| = j} (\gr^F \E^i(A))_\alpha\] where $|\alpha|$ is the length of the monomial
$\alpha$.
\end{lemma}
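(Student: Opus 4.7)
The plan is to exploit two compatibilities: that the deg-lex filtration $F$ on $A$ refines the internal $\mathbb{N}$-grading, and that under the finite-dimensionality hypothesis the ungraded and $\mathbb{N}$-graded $\Ext$ agree (\cite{phan}*{Lemma 1.4}), so that each finite-dimensional filtered piece has the same dimension as its associated graded. Once both are in hand, the statement is essentially a dimension count.

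First I would unpack the filtration. Because deg-lex compares monomials by total degree before breaking ties lexicographically, for any monomial $\alpha$ the piece of the filtration at $\alpha$ is the sum of $A_{<|\alpha|}$ with a subspace of $A_{|\alpha|}$. Consequently, the induced filtration on each internal-degree component $A_j$ is indexed only by monomials of length $j$, and each monomial-graded piece $(\gr^F A)_\alpha$ lies entirely in internal degree $|\alpha|$. Transporting this compatibility through a minimal free resolution of $\bbk$ (the same mechanism that produces the filtration on $\E(A)$ in the first place) shows that the induced filtration on $\E(A)$ also refines the internal grading, so its restriction to $\E^{i,j}(A)$ has associated graded equal to
\[\bigoplus_{|\alpha|=j} (\gr^F \E^i(A))_\alpha.\]

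Second, I would invoke \cite{phan}*{Lemma 1.4} to treat $\E^{i,j}(A)$ as a finite-dimensional filtered vector space in the ungraded sense. Since the associated graded of a finite-dimensional filtered space has the same total dimension as the space itself,
\[\dim \E^{i,j}(A) \;=\; \dim \gr^F \E^{i,j}(A) \;=\; \dim \bigoplus_{|\alpha|=j} (\gr^F \E^i(A))_\alpha,\]
which is the claim.

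The main obstacle is verifying that the argument of \cite{phan}*{Lemma 2.11}, stated there without isolating a single internal degree, can be refined to track the $j$-component separately. This reduces to the single observation that deg-lex refines total degree, so the modification is bookkeeping rather than new input; the finite-dimensionality hypothesis is what prevents the usual pitfalls when passing between graded and ungraded $\Ext$.
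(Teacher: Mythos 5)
Your proposal is correct and follows essentially the same route the paper indicates: the paper gives no standalone proof but states the lemma as ``a slight modification of the proof of \cite{phan}*{Lemma 2.11},'' with \cite{phan}*{Lemma 1.4} supplying the graded/ungraded coincidence, and your argument makes that modification explicit by observing that deg-lex refines total degree, so the monomial filtration restricts to each internal-degree component $\E^{i,j}(A)$ and the dimension count passes to the associated graded.
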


To answer to question (\ref{q:fingen}),
let
\begin{align*}
A &:= \frac{\bbk\left<a, b, c, d, e, f, l, m\right>}{\left<bc - ef, ae, da-lm, cl\right>}, \\
B &:= \frac{\bbk\left<z\right>}{ \left<z^4\right>},\\
\intertext{and}
C &:= A \sqcup B.
\end{align*}
We order the monomials in
\[\bbk\left<a, b, c, d, e, f, l, m, z\right>\]
by degree-lexicographical order. This creates a filtration $F$ on $A, B,$ and $C$, as well as their corresponding $\Ext$-algebras. Using Bergman's diamond lemma in \cite{bergman}, we see that
\[\gr^F A \simeq \frac{\bbk\left<a, b, c, d, e, f, l, m\right>}{\left<ef, ae, lm, cl,  abc, cda, \right>}.\]

Consider the structure of $\E(\gr A)$. We construct a basis for a
vector space $V_\bullet \subseteq (\gr A)_+^\bullet$ so that $\gr
A \otimes V_\bullet \subseteq \gr A \otimes (\gr A)_+^\bullet$ is
a minimal projective resolution of $_{\gr A}\bbk$. This is done by
applying the left annihilator algorithm described   in
\cite{csk2alg}.   Figure \ref{f:csgraphfingen} depicts the minimal
left annihilation of  monomials in the basis for $V_\bullet$.
 Consider all paths
ending with a first-degree monomial. By tensoring the vertices in
all such paths
 we obtain a basis for
$V_\bullet$. For example, $ab \otimes cd \otimes ab \otimes c$ is
a basis element for $V_4$. Other examples and applications of
these graphs can be found in \cite{wakeforest}, where the graphs
are used to characterize finiteness properties of Yoneda algebras.
 In Figure \ref{f:csgraphfingen}, the dotted lines represent
left-annihilation from an inessential relation (e.g. $ab$
left-annihilates $cd$ because of the essential relation $abc$).

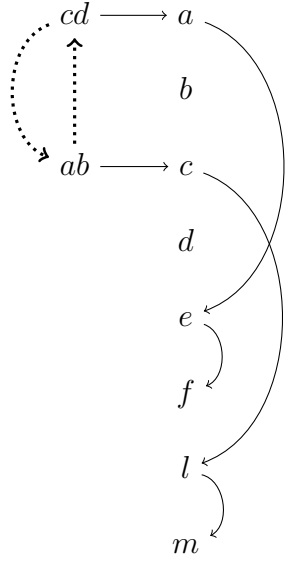
\begin{figure}
\begin{tikzpicture}
\matrix (m) [matrix of math nodes, row sep=1em,
column sep=2em, text height=1.5ex, text depth=0.25ex]
{cd & a\\
& b \\
ab & c \\
& d \\
& e \\
& f \\
& l \\
& m \\
 }; \path[->] (m-1-1) edge (m-1-2); \path[->] (m-1-1) edge
[bend right=70, dotted, very thick] (m-3-1);
%\path[->] (m-1-1) edge [bend right=80] (m-9-2);
\path[->] (m-3-1) edge [dotted, very thick] (m-1-1);
\path[->] (m-3-1) edge (m-3-2);
\path[->] (m-1-2) edge [bend left=70] (m-5-2);
\path[->] (m-3-2) edge [bend left=70] (m-7-2);
%\path[->] (m-3-2) edge [bend left=80] (m-10-2);
\path[->] (m-5-2) edge [bend left=70] (m-6-2); \path[->] (m-7-2)
edge [bend left=70] (m-8-2);
%\path[->] (m-9-2) edge [bend left=70] (m-12-2);
%\path[->] (m-10-2) edge [bend left=70] (m-11-2);
\end{tikzpicture}
\caption{A basis for $V_\bullet$ can be constructed from the paths ending in a first-degree monomial in this graph.\label{f:csgraphfingen}}
\end{figure}

Likewise, we set $W_{2i} := \bbk (z^3 \otimes z)^{\otimes i}$ and $W_{2i+1} := \bbk z \otimes W_{2i}$, so
that $B \otimes W_\bullet \subseteq B \otimes B_+^{\otimes \bullet}$ is a minimal projective resolution of
$_B\bbk$. (As with $V_\bullet$, we can visualize a basis for $W_\bullet$ using Figure \ref{f:csgraphfingenB}.)

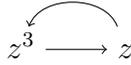
\begin{figure}[h]
\begin{tikzpicture}
\matrix (m) [matrix of math nodes, row sep=1em,
column sep=2em, text height=1.5ex, text depth=0.25ex]
{z^3 & z\\};
\path[->] (m-1-1) edge (m-1-2);
\path[->] (m-1-2) edge [bend right=70] (m-1-1);
\end{tikzpicture}
\caption{A basis for $W_\bullet$ can be constructed from the paths ending in the first-degree monomial $z$ in this
graph.\label{f:csgraphfingenB}}
\end{figure}

\begin{theorem}\label{answer1} $C$ is $2$-$4$-determined, but $E(C)$ is not finitely generated.\end{theorem}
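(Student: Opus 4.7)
By Proposition \ref{p:freeprodext}, $\E(C) \cong \E(A) \sqcap \E(B)$, where $\sqcap$ is the categorical product of connected graded algebras: $(\E(A)\sqcap\E(B))^{i,j} = \E^{i,j}(A) \oplus \E^{i,j}(B)$ for $i \geq 1$, with cross products vanishing. Both the bi-graded structure and the finite-generation of $\E(C)$ therefore decompose across $\E(A)$ and $\E(B)$. Since $B = \bbk[z]/\langle z^4\rangle$ is $4$-Koszul, $\E(B)$ is one-dimensional at the bi-degrees $(2m, 4m)$ and $(2m+1, 4m+1)$, all of which satisfy $j - i$ even and fit the $2$-$4$-determined pattern, and $\E(B)$ is finitely generated by $\E^1(B) \oplus \E^2(B)$.

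For $\E(A)$, I use the monomorphism $\gr^F \E(A) \hookrightarrow \E(\gr^F A)$ of Lemma \ref{l:bigradedmono} together with Figure \ref{f:csgraphfingen}. Counting paths of length $i$ ending at a first-degree monomial, one finds $\dim \E^{i, 2i-3}(\gr^F A) = \dim \E^{i, 2i-2}(\gr^F A) = \dim \E^{i, 2i-1}(\gr^F A) = 2$ for $i \geq 3$, and these exhaust $\E^i(\gr^F A)$. To check that $C$ is $2$-$4$-determined, I would show that the classes in $\E(\gr^F A)$ at bi-degrees with $j - i$ odd do not lift to $\gr^F \E(A)$. The base case $\E^{2, 3}(A) = 0$ holds because $A$ is purely quadratic (so $\E^{2, j}(A) = 0$ for $j \neq 2$), and the cancellation propagates to higher cohomological degrees via a spectral-sequence comparison or direct analysis of the multiplicative structure of $\E(A)$.

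For non-finite generation, observe that $(i, 2i-1)$ cannot arise as a non-trivial product in $\E(\gr^F A)$: any factorization $(i, 2i-1) = \sum_{k=1}^{m}(i_k, j_k)$ with $m \geq 2$ and $i_k \geq 1$ satisfies $\sum j_k \leq \sum (2 i_k - 1) = 2i - m \leq 2i - 2 < 2i - 1$, since every nonzero bi-graded component of $\E(\gr^F A)$ has $j_k \leq 2 i_k - 1$. Hence every nonzero class at $(i, 2i-1)$ is a new generator of $\E(\gr^F A)$. When $i$ is odd, $j - i = i - 1$ is even, so $(i, 2i-1)$ is an allowed $2$-$4$-determined bi-degree; lifting the non-vanishing from $\E(\gr^F A)$ to $\E(A)$ then yields a new generator for each odd $i \geq 3$, making $\E(A)$—and hence $\E(C)$—not finitely generated.

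The main obstacle is the lifting step appearing in both parts: Lemma \ref{l:bigradedmono} furnishes $\gr^F \E(A) \hookrightarrow \E(\gr^F A)$ only as a subalgebra, so establishing $\E^{i, 2i-1}(A) \neq 0$ for infinitely many odd $i$ (for non-finite generation) and $\E^{i, j}(A) = 0$ at bi-degrees with $j - i$ odd (for $2$-$4$-determined) both require a complementary argument. A natural approach is to compare Hilbert series or to run the spectral sequence converging from $\E(\gr^F A)$ to $\gr^F \E(A)$, verifying that its differentials cancel exactly the \emph{bad} bi-degrees while preserving the \emph{good} ones.
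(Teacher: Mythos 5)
There are two genuine problems with your plan. First, you have misread the target: being $2$-$4$-determined is only the upper bound $\E^{i,j}(C)=0$ for $j>\delta(i)$ (with $\delta(2m)=4m$, $\delta(2m+1)=4m+1$); it imposes no parity condition on $j-i$. Your proposed step of showing $\E^{i,j}(A)=0$ whenever $j-i$ is odd is both unnecessary and false: the paper's explicit resolution gives $\E^i(A)=\E^{i,2i-2}(A)\neq 0$ for all $i\geq 2$, and $(2i-2)-i=i-2$ is odd whenever $i$ is odd. The paper's actual argument for this half is short: Figure \ref{f:csgraphfingen} gives $\E^{i,j}(\gr^F A)=0$ unless $j\leq 2i-1$; Lemmas \ref{l:bigradedmono} and \ref{l:gradedtrans} transfer this \emph{vanishing} to $\E(A)$ (vanishing does pass through the monomorphism in that direction); since $2i-1\leq\delta(i)$, Proposition \ref{p:freeprodext} finishes.

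Second, and more seriously, your non-finite-generation argument hangs on $\E^{i,2i-1}(A)\neq 0$ for infinitely many odd $i$, which you propose to obtain by lifting nonvanishing from $\E(\gr^F A)$. You correctly flag that Lemma \ref{l:bigradedmono} is only a monomorphism $\gr^F\E(A)\hookrightarrow\E(\gr^F A)$, so nonvanishing does not lift --- and here it genuinely fails: $\E^{i,2i-1}(A)=0$ for all $i\geq 2$. The classes that actually obstruct finite generation sit in bidegree $(i,2i-2)$, and your inequality $\sum j_k\leq 2i-m$ does not exclude a product of two classes landing in $(i,2i-2)$, so even after relocating the generators your degree argument would not close without knowing $\E(A)$ exactly. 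The paper instead exhibits an explicit graded free resolution of $_A\bbk$ with eventually $2$-periodic matrices $M_i$, proves exactness by a finite Gr\"obner-basis computation in low cohomological degrees combined with the vanishing bound $j\leq 2i-1$ and a periodicity argument, concludes $\E^1(A)=\E^{1,1}(A)$ and $\E^i(A)=\E^{i,2i-2}(A)$ for $i\geq 2$, and then checks by bidegree bookkeeping that every product $\E^{j,k}(A)\otimes\E^{i-j,2i-k-2}(A)$ vanishes for $i\geq 3$. Some such exact determination of the bigraded structure of $\E(A)$ itself (not just of $\E(\gr^F A)$) is unavoidable; the filtration comparison alone cannot supply it.
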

\begin{proof}
First, it's clear that the ideal of relations for $C$ is generated
in degrees $2$ and $4$.

Now, by examining Figure \ref{f:csgraphfingen}, we see that
$\E^{i,j}(\gr^F A) = 0$ unless  $j \leq 2i - 1.$ Thus, by Lemmas
\ref{l:bigradedmono} and \ref{l:gradedtrans}, we know that
$\E^{i,j}(A) = 0$ unless $j \leq 2i-1$. Likewise, we see that
$\E^{i,j}(B) = 0$ unless $j \leq \delta(i)$ (where $\delta$ is as
defined in the paragraph after Definition \ref{d:koszul}). Now,
since $2i-1 \leq \delta(i)$ for every $i$, applying Proposition
\ref{p:freeprodext}, we see that $C$ is $2$-$4$-determined.

By Proposition \ref{p:freeprodext}, to show that $\E(C)$ is not
finitely generated, it suffices to show that $\E(A)$ is not
finitely generated. We shall exhibit a projective resolution for $_A \bbk$. Consider the sequence
\begin{equation}\label{e:resfora}
\begin{split}\cdots \xrightarrow{M_{i+1}} A(-2i+2)^{2} \xrightarrow{M_i} \cdots
\xrightarrow{M_5} A(-6)^2 \xrightarrow{M_4} A(-4)^2 \xrightarrow{M_3} A(-2)^{4}\\
 \xrightarrow{M_2} A(-1)^{8} \xrightarrow{M_1} A \xrightarrow{M_0} \bbk \rightarrow 0\end{split}
\end{equation}
where the maps are  right multiplication by the matrices
 \setcounter{MaxMatrixCols}{20} % Seriously!
\begin{align*} M_1 &= (a, b, c, d, e, f, l, m)^T,\\
M_2 &= \left(\begin{matrix}
0 & 0 & 0 & 0 & a & 0 & 0 & 0  \\
0 & 0 & 0 & 0 & 0 & 0 & c & 0  \\
0 & 0 & b & 0 & 0 & -e & 0 & 0  \\
-d & 0 & 0 & 0 & 0 & 0 & 0 & l
\end{matrix}\right),\\
M_3 &= \left(\begin{matrix}
cd & 0 & 0 & 0   \\
0 & ab & 0 & 0  \\
%0 & 0 & ab & 0 & 0 & 0 & 0 \\
%0 & 0 & 0 & 0 & 0 & cd & 0
\end{matrix}\right),\\
M_{2i} & = \left(\begin{matrix}
ab & 0 \\
0 & cd
\end{matrix}\right) \text{ for $i \geq 2$, and}\\
M_{2i+1} & = \left(\begin{matrix}
cd & 0 \\
0 & ab
\end{matrix}\right) \text{ for $i \geq 2$.}
\end{align*}

Given any fixed integer $N$, noncommutative Gr\"obner bases can be used to calculate the dimensions of $\E^{i,j}(A)$ for $j < N$ . We have used the computer program {\tt Bergman} \cite{Bergmansoftware} for this purpose.

Note that (\ref{e:resfora}) satisfies $\img M_i \subset \ker M_{i-1}$. Indeed, (\ref{e:resfora}) is clearly exact up to the matrix $M_2$. Gr\"obner bases calculations show that $\E^{3,3}(A) = 0$ and $\E^{3,5}(A) = 0$. From above, we know that $\E^{3,j}(A) = 0$ for $j > 5$, and so (\ref{e:resfora}) is exact up to the matrix $M_3$.

Likewise, Gr\"obner bases calculations show that $\E^{4, 4}(A)$, $\E^{4,5}(A) = 0$, and $E^{4,7}(A) = 0$. From above, we know that $\E^{4,j}(A) = 0$ for $j > 7$, and so (\ref{e:resfora}) is exact up to the matrix $M_4$.

However, $\img M_4 = \ker M_3$ implies that $\img M_{i+1} = \ker
M_i$ for all $i \geq 3$. It follows  that (\ref{e:resfora}) is
exact.

Therefore, we have
\[\E^{i}(A) = \begin{cases}
\E^{i, i}(A) & \text{if $i = 0, 1$,}\\
\E^{i, 2i - 2}(A) & \text{if $i \geq 2$.}
\end{cases}
\]

Fix $i \geq 3$. If the
elements of $\E^{i, 2i-2}(A)$ were generated by lower
cohomological-degree elements of $\E(A)$, then the multiplication
map
\begin{equation} \label{e:multmap} \bigoplus_{\substack{0 < j < i\\ 0< k < 2i-2}} \E^{j, k}(A) \otimes \E^{i - j, 2i - k - 2}(A) \rightarrow \E^{i, 2i-2}(A)\end{equation}
would be surjective.  We will show that this is not the case.

Suppose $2 \leq j < i$. Then for $E^{j,k}(A)$ to be nonzero, we
would need $k = 2j - 2$. This implies $2i - k - 2 = 2(i-j)$, which
is neither $i-j$ nor $2(i - j) - 2$, and so $E^{i-j,2i-k-2}(A)=0$.

On the other hand, suppose $j = 1$. Then for $E^{1,k}(A)$ to be
nonzero, we would need $k = 1$.  This implies  $2i - k - 2 = 2i -
3$. However, $i - j = i - 1 \geq 2$, and so
$E^{i-j,2i-k-2}(A)=E^{i-1,2i-3}(A)=0$.

% $\E^{i-j}(A) =\E^{i-j,2i-4}(A)$.

In either case, the map in (\ref{e:multmap}) is zero for $i \geq
3$. Hence, for $i \geq 3$, $\E^i(A)$ is not generated by lower
cohomological-degree elements of $\E(A)$. Therefore, $\E(A)$ is
not finitely generated.
\end{proof}

Now we answer question (2).
%\subsection{Answer to question (\ref{q:k2})}
\begin{theorem}\label{answer2} There exists a 2-$d$-determined algebra (with infinite global dimension) which is not
$\ktwo$ even though its Yoneda algebra is finitely generated.
\end{theorem}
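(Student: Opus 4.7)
The plan is to produce an example of the form $C = A' \sqcup B$ analogous to the construction in Theorem~\ref{answer1}, but engineered so that only finitely many ``exotic'' generators appear in the Yoneda algebra of the first factor. Since the free product coproduct structure from Proposition~\ref{p:freeprodext} preserves finite generation on each side, this will make $\E(C)$ finitely generated while still containing at least one element of $\E^i(C)$ for some $i \geq 3$ that is not decomposable as a product of $\E^1$ and $\E^2$ classes, thereby ruling out the $\ktwo$ property.

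The first step is to construct an algebra $A'$ with defining relations in degrees $2$ and $d$ whose minimal free resolution of $\bbk$ terminates at some finite cohomological degree, so that $A'$ has finite global dimension and $\E(A')$ is finite-dimensional (hence trivially finitely generated), and whose Yoneda algebra still contains an indecomposable class in $\E^3(A')$ (or in some higher but fixed cohomological degree). A natural starting point is to modify the algebra $A$ from the proof of Theorem~\ref{answer1} by imposing additional relations that truncate the periodic tail of the resolution~\eqref{e:resfora} beyond some cohomological degree, while preserving at least one of the indecomposable classes in $\E^{3,4}(A)$ or $\E^{4,6}(A)$. The survival of the indecomposable class in the modified algebra can be verified by Gr\"obner basis computation using a tool such as \texttt{Bergman}, mirroring the calculations in the proof of Theorem~\ref{answer1}.

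The second step is to take $B = \bbk\langle z\rangle/\langle z^d\rangle$, as in Theorem~\ref{answer1}: this is $d$-Koszul, has infinite global dimension, and has a finitely generated Yoneda algebra. Setting $C := A' \sqcup B$, infinite global dimension of $C$ is inherited from $B$, and Proposition~\ref{p:freeprodext} yields $\E(C) \simeq \E(A') \sqcap \E(B)$, so $\E(C)$ is finitely generated because both factors are. Bi-degree bounds on $\E(A')$, readable off its finite minimal resolution, combine with the $d$-Koszul bi-degree bound on $\E(B)$ through Lemmas~\ref{l:bigradedmono} and~\ref{l:gradedtrans} exactly as in Theorem~\ref{answer1} to show that $C$ is 2-$d$-determined. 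The indecomposable class in $\E^i(A')$ for some $i \geq 3$ remains indecomposable in $\E(C)$ under the coproduct structure, so $C$ is not $\ktwo$.

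The main obstacle is choosing $A'$ correctly: enough relations must be added to force the resolution of $\bbk$ over $A'$ to terminate and to keep its bi-degree envelope compatible with 2-$d$-determinedness, but not so many that the indecomposable class in $\E^3$ is killed or forced into the image of a multiplication map of the type used in~\eqref{e:multmap} at the end of the proof of Theorem~\ref{answer1}. Confirming both finite global dimension of $A'$ and the survival of the indecomposable class in $\E^3(A')$ will rely on explicit Gr\"obner basis calculations of the low-degree strata of $\E(A')$.
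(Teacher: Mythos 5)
Your high-level strategy coincides with the paper's: form $C = A' \sqcup B$ with $B = \bbk\langle z\rangle/\langle z^4\rangle$ and invoke Proposition \ref{p:freeprodext} to transfer 2-$d$-determinedness, infinite global dimension, finite generation of the Yoneda algebra, and the failure of the $\ktwo$ property to the free product. The genuine gap is that you never actually produce the algebra $A'$. Everything that makes the theorem true --- the existence of an algebra of finite global dimension whose Yoneda algebra is finitely generated but not generated in cohomological degrees $1$ and $2$, with bidegrees still inside the 2-$d$-determined envelope --- is deferred to an unspecified modification of the algebra $A$ of Theorem \ref{answer1} and to Gr\"obner basis computations that are not carried out. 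It is far from clear that ``adding relations to truncate the periodic tail'' of (\ref{e:resfora}) can be done while simultaneously keeping the global dimension finite, preserving an indecomposable class in $\E^3$, and keeping the degrees of the added relations at $2$ or $d$ so that $C$ remains 2-$d$-determined. As written this is a plan for a construction, not a proof.

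The paper sidesteps this difficulty entirely by taking $A$ to be a known \emph{quadratic} algebra of finite global dimension that is not Koszul (a 13-generator, 14-relation example from \cite{cassidy}, with minimal resolution $0 \rightarrow A(-5) \rightarrow A(-3)^7 \rightarrow A(-2)^{14} \rightarrow A(-1)^{13} \rightarrow A$). For such an $A$ the two properties you propose to verify computationally are immediate: finite global dimension forces $\E(A)$ to be finite-dimensional, hence finitely generated, and a quadratic algebra that is not Koszul is automatically not $\ktwo$ (the nonzero class in $\E^{4,5}(A)$ cannot be a product of classes from $\E^{1,1}(A)$ and $\E^{2,2}(A)$). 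Note also that you impose on $A'$ the unnecessary requirement that it have relations in both degrees $2$ and $d$; the degree-$d$ relation of $C$ can come entirely from the factor $B$. To repair your argument, either exhibit an explicit $A'$ and verify its resolution, or observe that any non-Koszul quadratic algebra of finite global dimension already does the job.
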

\begin{proof} Let
\begin{align*}
A &:= \frac{\bbk\left<a, b,n, p, q, r, s, t, u, v, w, x, y\right>}{\left<\begin{matrix}np-nq, np-nr, ps-pt, qt-qu, rs-ru, sv- sw, tw-tx,\\ uv-ux, sv-sy, tw-ty, ux-uy, va-vb, wa-wb, xa-xb\end{matrix}\right>}, \\
B &:= \frac{\bbk\left<z\right>}{ \left<z^4\right>},\\
\intertext{and}
C &:= A \sqcup B.
\end{align*}

The algebra $A$ appears in \cite{cassidy}, and the following minimal projective resolution for $_A\bbk$ is given:
\[0 \rightarrow A(-5) \rightarrow A(-3)^7 \rightarrow A(-2)^{14} \rightarrow A(-1)^{13} \rightarrow
A.\] Note that $A$ is a quadratic algebra which is not Koszul, and
so $A$ is not $\ktwo$. However, as $A$ has finite global
dimension, $E(A)$ must be finitely generated. As shown above, the
algebra $B$ satisfies $\E^{2i}(B) = \E^{2i, 4i}(B)$, $\E^{2i+1}(B)
= \E^{2i+1, 4i+1}(B)$, and $\dim \E^i(B) = 1$.  $B$ has infinite
global dimension, but since it is $\ktwo$, $E(B)$ is
finitely-generated. It follows from   Proposition
\ref{p:freeprodext},  that $C$ is $2$-$4$-determined, has infinite
global dimension, and is not $\ktwo$.
\end{proof}

There is a small omission in the statement of Theorem 20 in
\cite{greenmarcos}. From their earlier proofs it is clear the
authors intended to include the hypotheses  ``\dots    and
$\mathpzc{g}_d$ is a Gr\"obner basis for the ideal it generates.''
The following example shows that without this hypothesis, the
conclusions of Theorem 20 are not valid.

\begin{example}\label{counter} Let $V=\{a,x,y,z\}$, ordered by $z>y>x>a$.    Then
$\mathpzc{g}=\mathpzc{g}_2\cup \mathpzc{g}_4=\{xa,az,ay\}
\cup\{y^2z^2, x^2y^2+a^4\}$ forms a   Gr\"obner basis.
$A=\displaystyle\frac{\kk\la a,x,y,z\ra}{\la \mathpzc g_2\ra}$ is
a quadratic monomial algebra and hence Koszul, and
$B=\displaystyle\frac{\kk\la a,x,y,z\ra}{\la \mathpzc g_d\ra}$ has
global dimension 2 and thus is 4-Koszul.  But over the algebra
$$R= \frac{\kk\la a,x,y,z\ra}{\la
x^2y^2+a^4,y^2z^2,xa,az,ay\ra},$$
  the module $\kk$ has a minimal projective resolution of the form
\[0 \rightarrow R(-6) \rightarrow R(-7)\oplus R(-6)\oplus R(-5)\oplus R(-3)^2 \rightarrow R(-4)^2\oplus R(-2)^{3}\\ \rightarrow \] \[ R(-1)^{4} \rightarrow R \rightarrow \kk \rightarrow 0.\]
This means that $E(R)^{3,6}\ne 0$ and so $R$  is neither  $\ktwo$
nor 2-4-determined.
 \end{example}

We can establish sufficient conditions for $R$ to be $\ktwo$ by
merging hypotheses from Vatne with those of Green and Marcos. Our
last theorem shows that we need only slightly stronger hypotheses
on the Gr\"obner basis to guarantee that $R$ is $\ktwo$.
\begin{proposition} Let  $\mathpzc{g}_2$, $\mathpzc{g}_d$ and  $\mathpzc{g}_2\cup \mathpzc{g}_d$  be
 Gr\"obner bases such that
 $\mathpzc{g}_2\cup \mathpzc{g}_d$    has no redundant elements
 (i.e.   any set of
 defining relations for $R$ has at least $|\mathpzc g_2|+|\mathpzc g_d|$
 elements).  If either
\begin{enumerate}
\item[(i)] $B=\T(V)/\la \mathpzc g_d \ra$ is $d$-Koszul, or\\
\item[(ii)] $\Ext_{\gr A}^1(\gr R,\kk)=\Ext^{1,d}_{\gr A}(\gr R,\kk)$,\\
\end{enumerate}
 then $R$ is $\ktwo$.
\end{proposition}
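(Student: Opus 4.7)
The plan is to verify the hypotheses of Proposition \ref{vatne-answer} by reducing to the monomial situation at the associated graded level. Since $\mathpzc{g}_2$ is a Gr\"obner basis, $\gr^F A = \T(V)/\la \mathrm{LT}(\mathpzc{g}_2)\ra$ is a quadratic monomial algebra and hence Koszul; the standard Gr\"obner-basis transfer then gives that $A$ itself is Koszul. Likewise $\gr^F R = \T(V)/\la \mathrm{LT}(\mathpzc{g}_2\cup\mathpzc{g}_d)\ra$ is a monomial quotient of $\gr^F A$ by the monomials $\mathrm{LT}(\mathpzc{g}_d)$. The no-redundancy hypothesis forces $\mathpzc{g}_2\cup\mathpzc{g}_d$ to be a minimal Gr\"obner basis, so no element of $\mathrm{LT}(\mathpzc{g}_d)$ contains an element of $\mathrm{LT}(\mathpzc{g}_2)$ as a connected subword; this is precisely the subword hypothesis of Proposition \ref{monom} for the pair $(\gr^F A,\gr^F R)$.

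The crux is then to verify the remaining hypothesis of Proposition \ref{monom}, namely
\[ \Ext^1_{\gr^F A}(\gr^F R,\kk) = \Ext^{1,d}_{\gr^F A}(\gr^F R,\kk). \]
Under (ii) this is immediate. Under (i), I would argue that because $\mathpzc{g}_d$ is a Gr\"obner basis, $\gr^F B = \T(V)/\la \mathrm{LT}(\mathpzc{g}_d)\ra$ has the same Hilbert series as $B$, and a Gr\"obner-deformation argument transfers $d$-Koszulity from $B$ to $\gr^F B$. Berger's combinatorial characterization of monomial $d$-Koszul algebras (\cite{berger}*{Proposition 3.8}) then imposes an overlap condition on $\mathrm{LT}(\mathpzc{g}_d)$ which, combined with the no-redundancy, forces every product $\mathrm{LT}(g)\cdot x$ (for $g\in \mathpzc{g}_d$, $x \in V$) either to vanish in $\gr^F A$ or to lie in the left ideal generated by $\mathrm{LT}(\mathpzc{g}_d)$. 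That is exactly the condition that the two-sided ideal defining $\gr^F R$ in $\gr^F A$ is generated by its degree-$d$ part as a left ideal.

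Granted the $\Ext^1$ condition, Proposition \ref{monom} yields that $\gr^F R$ has an almost linear resolution over $\gr^F A$. To transfer this conclusion to $R$ over $A$, I would invoke the module-level analogs of Lemmas \ref{l:bigradedmono} and \ref{l:gradedtrans}: a bigraded monomorphism $\gr^F \Ext_A(R,\kk)\hookrightarrow \Ext_{\gr^F A}(\gr^F R,\kk)$ together with the corresponding dimension comparison forces $\Ext^i_A(R,\kk)$ to vanish outside internal degree $d-1+i$ for all $i>0$. Hence $R$ admits an almost linear resolution over $A$, and Proposition \ref{vatne-answer} delivers $R \in \ktwo$. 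The main obstacle I anticipate is case (i): the deduction of the $\Ext^1$ condition from $d$-Koszulity of $B$ requires combining Berger's combinatorics with the Gr\"obner-basis hypotheses and must be carried out carefully; if the detour through $\gr^F B$ proves cumbersome, a direct combinatorial verification exploiting the no-redundancy in $\mathpzc{g}_2\cup\mathpzc{g}_d$ may be cleaner.
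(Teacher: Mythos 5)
Your case (ii) is broadly workable (modulo the final transfer step, discussed below), but case (i) contains a genuine gap: the implication you want --- that $d$-Koszulity of $B$ together with no-redundancy forces $\Ext^1_{\gr A}(\gr R,\kk)$ to be concentrated in internal degree $d$ --- is false. Take $V=\{x,y,z\}$ with $\mathpzc{g}_2=\{yx\}$ and $\mathpzc{g}_3=\{z^3\}$ (everything is already monomial, so passing to the associated graded changes nothing). Here $B=\kk\la x,y,z\ra/\la z^3\ra$ is $3$-Koszul (it is the free product of the free algebra $\kk\la x,y\ra$ with $\kk[z]/\la z^3\ra$, so Proposition \ref{p:freeprodext} applies), all three sets are Gr\"obner bases, and there is no redundancy. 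Yet in $A=\kk\la x,y,z\ra/\la yx\ra$ the monomial $z^3x$ is nonzero and does not lie in the left ideal $A\cdot z^3$, whose degree-$4$ part is spanned by $xz^3$, $yz^3$, $z^4$; hence the two-sided ideal $\la z^3\ra_A$ requires left-module generators in degree $4$ (and in fact in arbitrarily high degrees, e.g.\ $z^3xy$), so $\Ext^{1,4}_A(R,\kk)\neq 0$. Your specific combinatorial claim --- that Berger's overlap condition on $\mathrm{LT}(\mathpzc{g}_d)$ forces each $\mathrm{LT}(g)\cdot x$ to vanish or fall into the left ideal --- fails here because Berger's condition constrains only the overlaps among the degree-$d$ relations themselves and says nothing about their interaction with the generators of $V$ or with $\mathpzc{g}_2$. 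So conditions (i) and (ii) are genuinely independent, and case (i) cannot be routed through Proposition \ref{monom}. The paper handles (i) by a completely different mechanism: $d$-Koszulity passes from $B$ to $\gr B$ by \cite{greenmarcos}*{Theorem 10}, then \cite{greenmarcos}*{Theorem 14} shows the monomial algebra $\gr R$ is $2$-$d$-determined, and \cite{greenmarcos}*{Theorem 16} upgrades this to $\ktwo$ for monomial algebras.

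There is a second, smaller problem in your final step. To pass from $\gr R$ back to $R$ you invoke ``module-level analogs'' of Lemmas \ref{l:bigradedmono} and \ref{l:gradedtrans} for $\Ext_A(R,\kk)$; these are neither stated in the paper nor in the cited references, so you would have to prove them before concluding that $R$ itself has an almost linear resolution over $A$. The paper avoids this entirely by transferring the $\ktwo$ property rather than the resolution: since $\mathpzc{g}_2\cup\mathpzc{g}_d$ has no redundant elements, it is an essential Gr\"obner basis in the sense of \cite{phan}, and \cite{phan}*{Theorem 1.7} then yields that $\gr R$ being $\ktwo$ implies $R$ is $\ktwo$. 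Note this is also where the no-redundancy hypothesis does its real work; in your argument it is used only to obtain the subword condition of Proposition \ref{monom}, which leaves the concluding transfer unsupported.
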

\begin{proof}  Using the filtration defined by the Gr\"obner bases we create the monomial algebras $\gr A$, $\gr B$ and
$\gr R$. We will show that $\gr R$ is a $\ktwo$ algebra.

 For case $(i)$, assume that $B$
is $d$-Koszul.  Then   $grB$  is $d$-Koszul by
\cite{greenmarcos}*{Theorem 10}.
 It follows from \cite{greenmarcos}*{Theorem 14} that $\gr R$ is 2-$d$-determined, and hence $\ktwo$
 by \cite{greenmarcos}*{Theorem 16}.

For case $(ii)$, if
$\Ext_{\gr A}^1(\gr R,\kk)=\Ext^{1,d}_{\gr A}(\gr R,\kk)$ then   by
proposition \ref{monom}, $\gr R$ has an almost linear resolution
over $\gr A$ and is therefore $\ktwo$.

 In either case, $\gr R$ is a monomial $\ktwo$ algebra. Since $\mathpzc g_2 \cup\mathpzc g_d$ has
  no redundant elements, it forms an essential Gr\"obner basis in the sense of \cite{phan}.
  Thus by \cite{phan}*{Theorem 1.7},  $R$ itself is $\ktwo$.
\end{proof}

 Condition $(ii)$ is certainly not necessary. Is
condition $(i)$ necessary?

\bibliographystyle{alpha}
\bibliography{CassidyPhan}

\end{document}